\PassOptionsToPackage{reqno}{amsmath}
\documentclass[11pt,oneside]{amsart}

\usepackage[T1]{fontenc}
\usepackage[utf8]{inputenc}
\usepackage{graphicx}
\usepackage{graphics}
\usepackage{amssymb}
\usepackage{amstext}
\usepackage{amsthm}
\usepackage{amsfonts}
\usepackage{amsmath}

\usepackage{xcolor}

\usepackage{hyperref}
\hypersetup{
    colorlinks,
    linkcolor={red!50!black},
    citecolor={blue!50!black},
    urlcolor={blue!80!black}
}

\theoremstyle{plain}
\newtheorem{teorema}{Theorem}
\newtheorem{lemma}[teorema]{Lemma}

\newtheorem{proposizione}[teorema]{Proposition}
\newtheorem{corollary}[teorema]{Corollary}
\newtheorem{quest}{Question}

\theoremstyle{definition}

\theoremstyle{remark}
\newtheorem{remark}[teorema]{Remark}
\newtheorem{esempio}[teorema]{Example}

\newcommand{\noshow}[1]{}

\newcommand{\norm}[1]{\lvert #1 \rvert}

\title{Comparing word probabilities \\ in finite groups}

\author[L.\ Putignano]{Lorenzo Putignano}
\address{L.\ Putignano: Okinawa Institute of Science and Technology, Okinawa, Japan.
\emph{email: lorenzo.putignano@oist.jp}}

\author[M.\ Vannacci]{Matteo Vannacci}
\address{M.\ Vannacci: Dipartimento di Matematica `Ulisse Dini', Universit\`a degli Studi di Firenze, Italy. \emph{email: matteo.vannacci@unifi.it}}

\thanks{The second named author is funded by the Italian program Rita Levi Montalcini for young researchers (Edition 2021)}

\begin{document}

\begin{abstract}
    We use results from extremal graph theory and analysis to compare word probabilities in finite groups. In particular, we show that the probability of satisfying the 2-Engel word admits a positive lower bound in terms of the probability that an element has order dividing 3. More generally, using a group removal lemma, we obtain analogous bounds for $x^m$-good words.
\end{abstract}

\maketitle

\section{Introduction}
 The purpose of this article is to use techniques from extremal graph theory and analysis in the study of probabilistic group theory.  
 
 Let $G$ be a finite group, and let $F_d$ be the free group with basis $\{x_1,\ldots,x_d\}$.
 Every word $w=w(x_1,\ldots,x_d)$ in $F_d$ induces the \emph{word map} $w:G^d\longrightarrow G$ given by $(g_1,\dots,g_d)\mapsto w(g_1,\ldots,g_d)$. We denote by $P(G,w)$ the probability that $w$ evaluates to the identity element of $G$ on a uniformly chosen $d$-tuple; that is,
\[
  P(G,w)=\frac{\lvert\{(g_1,\ldots,g_d)\in G^d\mid w(g_1,\ldots,g_d)=1_G\}\rvert}{|G|^d}.
\] 

A classical result of Gustafson \cite{gust} shows that if $P(G,[x,y])> 5/8$, then the group $G$ is abelian. Since Gustafson's result, many similar statements have been proved for other words; see, for instance, \cite{lp,malekan1,malekan2,Sh}. Motivated by these, we might expect that if $w\in F_d$ and a sufficiently large proportion of $d$-tuples $(g_1,\ldots,g_d)$ of $G^d$ satisfy $w(g_1,\ldots,g_d)=1$, then the group $G$ is `close' to satisfying the identity $w$. This viewpoint is closely related to the theory of probabilistic identities; see Section~\ref{sec:LS}.
    
In this article, we take a slightly different point of view. Rather than studying a single word probability, we compare the probabilities associated with different word maps. Our main question is the following. 
\begin{quest}\label{quest}
 Let $w_1$ and $w_2$ be  two words. Is there a non-decreasing function $f:(0,1]\to (0,1]$ such that the inequality $P(G,w_2)\ge f(P(G,w_1))$ holds  for every finite group $G$? 
\end{quest}
In this article, we give positive answers to the previous question for several pairs of words $(w_1,w_2)$. Our arguments combine combinatorial and analytic methods and yield quantitative bounds.

In Section~\ref{sec2}, we compare the word probabilities associated with $w_1(x)=x^3$ and the \emph{2-Engel word} $w_2(x,y) = [x,y,y]$. In Section~\ref{sec:triangle}, we use a triangle removal lemma (see Theorem~\ref{thm:trirem}) to prove that $P(G,[x,y,y])$ admits a positive lower bound in terms of $P(G,x^3)$. In Section~\ref{sec:elem_prob}, we give an elementary probabilistic argument that yields an explicit bound when $P(G,x^3)>1/2$.

In Section~\ref{general}, we generalise Proposition~\ref{prop:cubetoengel} using a group removal lemma (see Theorem~\ref{thm:removal}). More precisely, we show that Question~\ref{quest} has a positive answer for every pair $(x^m,w)$ such that $w$ is a \emph{$x^m$-good word}, in the sense introduced in Section~\ref{sec:good}. In Section~\ref{sec:constants}, we discuss the quantitative bounds arising from this argument and show that, when $m$ is even, the group removal lemma can be avoided and one obtains an explicit polynomial bound for $P(G,w)$.



Finally, in Section~\ref{sec:commutators} we prove that Question~\ref{quest} has a positive answer for the pairs $([x,y],[u,v])$ where $[x,y]$ is the commutator word and $u$ and $v$ are two words in disjoint variables. Here, more than the result itself, it is interesting to note that the proof uses Jensen's inequality, which to the best of our knowledge has never been used to study probability in finite groups. 



\subsection{Probabilistic identities}\label{sec:LS}
The motivation for Question~\ref{quest} comes from the theory of probabilistic identities in profinite groups. Any profinite group can be equipped with its normalised Haar measure $\mu$, making it a probability space. If $G$ is residually finite with profinite completion $\widehat G$, we extend the notation $P(G,w)$ by setting 
\[P(G,w)=\mu(\{(g_1,\ldots,g_k)\in \widehat{G}^k \mid w(g_1,\ldots,g_k)=1\}).\]
A word $w\in F_k$ is said to be a probabilistic identity in a residually finite group $G$ if $P(G,w)>0$.

The main conjecture about probabilistic identities is the Larsen--Shalev Conjecture \cite[Problem~10]{LS}: if a finitely generated residually finite group $G$ satisfies a probabilistic identity, then $G$ satisfies an identity. 

Let $w_1$ and $w_2$ be two words, and suppose that $w_1$ is a probabilistic identity in a finitely generated residually finite group $G$. Write $p=P(G,w_1)>0$. Let $(N_i)_{i\geq 1}$ be a descending sequence of finite-index normal subgroups of $G$ with trivial intersection. Then, by \cite[p.\ 211, (11.2)]{LuSe}, $P(G,w_1)=\inf_i P(G/N_i,w_1)$ and so $P(G/N_i,w_1)\geq p$
for every $i$. Hence, if $f$ is as in Question~\ref{quest}, then
\[
P(G/N_i,w_2)\geq f(P(G/N_i,w_1))\geq f(p),
\]
where the second inequality follows from the fact that $f$ is non-decreasing. Again, by \cite[p.\ 211, (11.2)]{LuSe}, we obtain
 that $P(G,w_2)\geq f(p)>0$. Thus, $w_2$ is also a probabilistic identity in $G$. This explains the monotonicity assumption in Question~\ref{quest}.


Hence, if the Larsen–Shalev conjecture is known for probabilistic identities defined by $w_2$, then the same conclusion follows for $w_1$. Thus, a positive answer to Question~\ref{quest} may allow one to transfer known cases of the Larsen--Shalev conjecture from $w_2$ to $w_1$. By Proposition~\ref{prop:cubetoengel}, this transfer applies to the words $w_1=x^3$ and $w_2=[x,y,y]$. However, to the best of our knowledge, it is not known whether the Larsen--Shalev conjecture holds for $[x,y,y]$.


\subsection{Notation and preliminaries}

We collect here the notation and preliminaries used throughout the article. 

In this article, we reserve the notation $P(G,w)$ for word probabilities, while $\mathbb P$ denotes the probability measure on a general probability space $(\Omega, \mathbb{P})$. 

We regard every finite group as a probability space equipped with the uniform probability measure: for $g\in G$, $\mathbb{P}(\{g\})=1/\lvert G\rvert$. The Cartesian power $G^d$ is equipped with the corresponding product probability measure.

A word $w$ in $d$ variables is an \emph{identity} in a group $G$ if $w(g_1,\ldots,g_d)=1$ for all $g_1,\ldots,g_d \in G$. Given a word $w$, we denote by \[w(G)=\langle w(g_1,\ldots,g_d)\mid g_1,\ldots,g_d\in G\rangle\] the \emph{verbal subgroup} of $w$ in $G$. We observe that $w$ is an identity in $G$ if and only if $w(G)=1$.

Given two elements $x$ and $y$ of a group $G$, the conjugate of $x$ by $y$ is denoted by $x^y$ and defined by $x^y=y^{-1}xy$. The \emph{commutator} of $x$ and $y$ is $[x,y]=x^{-1}y^{-1} xy = (y^x)^{-1} y$. The \emph{$2$-Engel} word is the nested commutator $[x,y,y]=[[x,y],y]$.

\subsection*{AI declaration and history of the results}

The present work started while the first author visited the second author in 2021 at the University of the Basque Country. In August 2026 a first draft of the article was prepared. This first version was basically the idea in Section~\ref{sec:elem_prob} below and it contained a gap, as outlined below, found by the authors. 

In a conversation with Gemini (version 3.1 Pro), the authors learned about the existence of the triangle removal lemma. The AI system did not produce a proof. Subsequently, the authors worked independently and developed the results contained in the article. 

Hence, the AI use was limited to: 1) the suggestion of a possibly interesting connection between word probabilities and extremal graph theory, 2) proofreading and language corrections in the late stages of the preparation of this article. 

\section{Comparing $x^3$ and 2-Engel probabilities}\label{sec2}
In this section, we compare the word probabilities associated with $x^3$ and the 2-Engel word. In Section ~\ref{sec:triangle}, we use the triangle removal lemma to obtain a positive lower bound for $P(G,[x,y,y])$ in terms of $P(G,x^3)$. In Section~\ref{sec:elem_prob}, we give an elementary probabilistic argument which yields an explicit quantitative bound when $P(G,x^3)>1/2$. 

\subsection{A bound via the triangle removal lemma}\label{sec:triangle}

We prove the following result.

\begin{proposizione}\label{prop:cubetoengel}
  There exists a non-decreasing function $f:(0,1] \to \mathbb (0,1]$ such that, for every finite group $G$,
  \[
    P(G,[x,y,y]) \ge f(P(G,x^3)).
  \]
\end{proposizione}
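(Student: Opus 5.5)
The plan is to turn the hypothesis ``many elements of order dividing $3$'' into a dense family of edge-disjoint triangles in an auxiliary graph, apply the triangle removal lemma (Theorem~\ref{thm:trirem}) to get many triangles overall, and then read off from each triangle a pair $(x,y)$ with $[x,y,y]=1$. Write $C=\{c\in G\mid c^3=1\}$ and $p=P(G,x^3)=\lvert C\rvert/\lvert G\rvert$. Let $\Gamma$ be the tripartite graph on $V_1\sqcup V_2\sqcup V_3$, with each $V_i$ a copy of $G$. For $i=1,2,3$ join $g\in V_i$ to $h\in V_{i+1}$ (indices mod $3$) whenever $g^{-1}h\in C$. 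A triangle is then a triple $(g,ga,gab)$ with $a,b\in C$ and $(ab)^{-1}\in C$, that is, $ab\in C$. Hence the number of triangles is $\lvert G\rvert\cdot N$, where
\[
N=\lvert\{(a,b)\in G^2\mid a^3=b^3=(ab)^3=1\}\rvert .
\]

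\textbf{Many edge-disjoint triangles.} For each $g\in G$ and $a\in C$ the triple $(g,ga,ga^2)$ is a triangle, because $a,a,a^{-2}=a\in C$. Distinct pairs $(g,a)$ give triangles that share no edge, since the $V_1$--$V_2$ edge $(g,ga)$ determines $(g,a)$. So $\Gamma$ contains $p\lvert G\rvert^2=\tfrac{p}{9}n^2$ pairwise edge-disjoint triangles, where $n=3\lvert G\rvert$. The contrapositive of Theorem~\ref{thm:trirem} then gives some $\delta=\delta(p/9)>0$ with at least $\delta n^3$ triangles, so $N\ge 27\delta\lvert G\rvert^2$. The resulting $\delta$ can be taken non-decreasing in $p$: more edge-disjoint triangles can only help, and one can replace $\delta$ by its running infimum. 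This supplies the monotone $f$.

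\textbf{From triangles to Engel pairs.} This is the step I expect to be the main obstacle. A bare triangle $a,b,ab\in C$ does not force a $2$-Engel relation, since the $(3,3,3)$ triangle group $\mathbb Z^2\rtimes C_3$ is not $2$-Engel. So I would redesign $\Gamma$ so that its triangles encode exactly the relations used in the classical proof that groups of exponent $3$ are $2$-Engel, namely that $y$ commutes with $y^x$. The first thing to try is to fix $y$ and put labelled edges on copies of $G$ so that a triangle through a vertex corresponds to $x$ with $y$, $y^x$, $yy^x$ and $y(y^x)^{-1}$ all in $C$. A suitable rewriting of $(uv)^3=(uv^{-1})^3=u^3=v^3=1$ for $u=y$, $v=y^x$ should then give $[u,v]=1$. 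If a single graph cannot encode all four conditions, I would instead use the general group removal lemma (Theorem~\ref{thm:removal}) for the system of equations in $(x,y)$. The argument is the same: many ``trivial'' solutions, such as $x=1$ with $y\in C$, which number $p\lvert G\rvert$ per free parameter, force a positive proportion $\delta(p)$ of all solutions.

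\textbf{Conclusion.} It remains to check that each solution contributes a pair with $[x,y,y]=1$, with bounded multiplicity. This gives $P(G,[x,y,y])\ge c\,\delta(p)=:f(p)$, and monotonicity of $f$ follows as above.
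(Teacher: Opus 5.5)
Your graph and removal steps are sound and are essentially the paper's argument. In your tripartite graph the edge-disjoint family $(g,ga,ga^2)$ includes $a=1$, which even spares you the paper's case split on $\lvert G\rvert$. (Apply Theorem~\ref{thm:trirem} with some $\varepsilon<p/9$, e.g.\ $p/18$, so that the contrapositive is strict.)

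The gap is the step you flag, and it rests on a false conclusion. A triangle $a,b,ab\in C$ \emph{does} produce a $2$-Engel pair, just not the pair $(a,b)$. The missing idea is the substitution $t=a^{-1}b$:
\begin{itemize}
\item Since $at=b$ and $a^2t=ab$, you have $a^3=(at)^3=(a^2t)^3=1$.
\item The collection formula gives $t^{a^2}t^{a}t=1=t^{a}t^{a^2}t$, so $t^{a^2}$ and $t^{a}$ commute.
\item Conjugating by $a^2$, $t$ commutes with $t^{a}$, hence with $[a,t]=(t^{a})^{-1}t$. That is, $[a,t,t]=1$ (Lemma~\ref{lem:local}).
\end{itemize}
Since $(a,b)\mapsto(a,a^{-1}b)$ is a bijection of $G^2$, you get $P(G,[x,y,y])\ge N/\lvert G\rvert^2\ge 27\,\delta(p/18)$ at once, with no redesign of $\Gamma$. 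Your $\mathbb Z^2\rtimes C_3$ example is consistent with this: there $a$ is a rotation and $a^{-1}b$ a translation, so $[a,a^{-1}b]$ is a translation commuting with $a^{-1}b$. That the group is not $2$-Engel is irrelevant, since one Engel pair per triangle suffices.

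The workaround you sketch would actually fail. The four conditions $y,\ y^x,\ yy^x,\ y(y^x)^{-1}\in C$, with nothing imposed on $x$, do not imply $[y,y^x]=1$ (equivalently $[x,y,y]=1$). Here is a counterexample:
\begin{itemize}
\item Let $B=\langle u,v\rangle$ be the free $2$-generator group of exponent $3$ (the Heisenberg group of order $27$). The swap $u\leftrightarrow v$ extends to an automorphism $\sigma$.
\item In $B\rtimes\langle\sigma\rangle$ take $x=\sigma$ and $y=u$, so $y^x=v$.
\item All four elements lie in $B$ and so have order dividing $3$, yet $[y,y^x]=[u,v]\neq1$.
\end{itemize}
The cube relations that really do the work involve $x$ itself, namely $x^3=(xy)^3=(x^2y)^3=1$. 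These are again just a triangle $x,\ xy,\ x\cdot xy\in C$.

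Your fallback is not available either. Theorem~\ref{thm:removal} handles a single equation $x_1\cdots x_m=g$, not a system in a non-abelian group. Moreover, the ``trivial solutions'' with $x=1$ are all destroyed by deleting the single value $1$ from the range of $x$, so they cannot force many solutions. Your closing ``it remains to check'' is therefore precisely where the proof is missing.
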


The proof uses the following classical result from extremal graph theory, known as the \emph{triangle removal lemma}. It was first proved in \cite{RS}; the formulation used here can be found in \cite{CF}. We recall that a \emph{triangle} in a graph is a closed circuit of length $3$ without backtracking and a graph is \emph{triangle-free} if it does not contain any triangle. 

\begin{teorema}\label{thm:trirem}
For every $\varepsilon> 0$, there exists $\delta=\delta_{\triangle}(\varepsilon)> 0$ such that any graph on $n$ vertices with at most $\delta n^3$ triangles can be made triangle-free by removing at most $\varepsilon n^2$ edges.
\end{teorema}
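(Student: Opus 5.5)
The approach is the standard one via Szemer\'edi's regularity lemma, followed by a cleaning step and a triangle counting lemma. Fix $\varepsilon>0$ and put $d=\varepsilon/4$ and $\eta=\min\{\varepsilon/4,\,d/4\}$. Apply the regularity lemma with parameter $\eta$ and lower bound $m_0=\lceil 8/\varepsilon\rceil$ on the number of parts. This gives an equipartition $V=V_1\cup\dots\cup V_k$ with $m_0\le k\le M(\eta)$, where $M(\eta)$ depends only on $\eta$. In this partition all but at most $\eta k^2$ pairs $(V_i,V_j)$ are $\eta$-regular. We may assume $n$ is large compared with $M(\eta)$. Otherwise we take $\delta<1/n^3$ for every small $n$, so that "at most $\delta n^3$ triangles" means "no triangles", and the statement is trivial.

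\emph{Cleaning.} We delete three kinds of edges:
\begin{enumerate}
\item all edges inside a part $V_i$, at most $k\cdot (n/k)^2/2\le n^2/(2k)\le \varepsilon n^2/16$ edges;
\item all edges between irregular pairs, at most $\eta k^2 (n/k)^2\le \varepsilon n^2/4$ edges;
\item all edges between pairs of density less than $d$, at most $d\binom{n}{2}\le \varepsilon n^2/8$ edges.
\end{enumerate}
In total at most $\varepsilon n^2$ edges are removed. It remains to show that the cleaned graph is triangle-free, provided $\delta$ is chosen suitably.

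\emph{Counting.} Suppose the cleaned graph still contains a triangle $xyz$. By construction, $x,y,z$ lie in three distinct parts $V_i,V_j,V_l$. Each of the three pairs among these parts is $\eta$-regular with density at least $d$ in the original graph. The triangle counting lemma then applies. Since $\eta\le d/2$, all but at most $2\eta|V_i|$ vertices of $V_i$ have at least $(d-\eta)|V_j|$ neighbours in $V_j$ and at least $(d-\eta)|V_l|$ neighbours in $V_l$. For such a vertex the two neighbourhoods have size at least $\eta$ times the part sizes, so regularity of $(V_j,V_l)$ gives at least $(d-\eta)^3|V_j||V_l|$ edges between them. Summing over these vertices, the original graph contains at least
\[
(1-2\eta)(d-\eta)^3\,\Bigl(\frac{n}{2k}\Bigr)^3\ \ge\ \frac{(1-2\eta)(d-\eta)^3}{8M(\eta)^3}\,n^3
\]
triangles. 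Choosing $\delta$ strictly smaller than this constant, which depends only on $\varepsilon$, contradicts the hypothesis. Hence the cleaned graph is triangle-free.

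The main obstacle is the regularity lemma itself, which I would cite rather than reprove. Its energy-increment proof is what gives the tower-type bound on $M(\eta)$, and hence the very small $\delta_\triangle(\varepsilon)$. Apart from that, the only delicate point is choosing the parameters so that the hypotheses of the counting lemma hold, namely $\eta\le d/2$ so that the neighbourhoods are large enough for regularity to apply.
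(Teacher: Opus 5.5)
Your proof is correct and is the standard Ruzsa--Szemer\'edi argument (regularity lemma, cleaning, triangle counting lemma). The paper does not reprove Theorem~\ref{thm:trirem}; it cites \cite{RS,CF}, where this is the argument given. The rounding terms you suppress in the cleaning step are $O(n)$ for fixed $M(\eta)$, so they are absorbed by the slack in your estimates, which sum to only $7\varepsilon n^2/16$.
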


\begin{remark}\label{rmk:nondec}
 In what follows, we will take $\delta_\triangle(\varepsilon)$ to be the \emph{optimal} triangle-removal constant; i.e.,\ $\delta_\triangle(\varepsilon)$ is the supremum of all constants for which Theorem~\ref{thm:trirem} holds for a fixed $\varepsilon$. With this convention, it is clear that $\delta_\triangle(\varepsilon) \le \delta_\triangle(\eta)$ for $\varepsilon\le \eta$; i.e.\ $\delta_\triangle$ is a non-decreasing function.
\end{remark}

We also need the following well-known result; see \cite[Thm.~12.3.5]{robinson}. We include a short proof for convenience.

\begin{lemma}\label{lem:local}
If $a^3=(ab)^3=(a^{2}b)^3=1,$ then $[a,b,b]=1$.
\end{lemma}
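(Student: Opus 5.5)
The plan is a direct computation with the three relations, after recasting $[a,b,b]=1$ as a commutation between $b$ and one of its conjugates. The relation $a^3=1$ will be used only to replace $a^2$ by $a^{-1}$. Thus the hypotheses say that $a$, $ab$ and $a^{-1}b$ all have order dividing $3$.

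First, using the convention $[x,y]=(y^x)^{-1}y$ from the preliminaries, we have $[a,b]=(b^a)^{-1}b$. Hence $[a,b,b]=[[a,b],b]=1$ if and only if $b$ commutes with $(b^a)^{-1}b$. Since $b$ commutes with itself, this holds if and only if $b$ commutes with $b^a=a^{-1}ba$. So the statement reduces to the identity
\[
b\,a^{-1}ba \;=\; a^{-1}ba\,b .
\]

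Next, I compute each side using one of the two remaining relations.
\begin{itemize}
\item \emph{Right-hand side.} From $(ab)^3=1$ we get $ababab=1$, so $bab=a^{-1}b^{-1}a^{-1}$. Therefore
\[
a^{-1}bab=a^{-2}b^{-1}a^{-1}=ab^{-1}a^{-1},
\]
using $a^{-2}=a$.
\item \emph{Left-hand side.} From $(a^2b)^3=(a^{-1}b)^3=1$ we get $a^{-1}ba^{-1}ba^{-1}b=1$, so $ba^{-1}b=ab^{-1}a$. Multiplying on the right by $a$ gives
\[
ba^{-1}ba=ab^{-1}a^2=ab^{-1}a^{-1}.
\]
\end{itemize}
Both sides equal $ab^{-1}a^{-1}$, which proves the identity and hence the lemma.

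The only delicate point is the initial reformulation: choosing to express $[a,b]$ through $b^a$ rather than through $a^b$. With the other choice, the relations $(ab)^3=(a^{-1}b)^3=1$ do not line up directly with the required identity. Once the reformulation is in place, the remaining manipulations are routine and just need care with the conjugation and commutator conventions.
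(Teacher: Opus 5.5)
Your proof is correct and is essentially the paper's argument. The paper uses the collection formula to write $(ab)^3=b^{a^2}b^ab$ and $(a^2b)^3=b^ab^{a^2}b$, deduces that $b^{a^2}$ and $b^a$ commute, conjugates by $a^2$ and concludes via $[a,b]=(b^a)^{-1}b$; you do the same computation already conjugated by $a^2$, showing directly that $b\,b^a$ and $b^a b$ both equal $ab^{-1}a^{-1}=(b^{-1})^{a^2}$.
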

\begin{proof}
The collection formula gives
\[
  (ab)^3=b^{a^2}b^a b=1, \quad \text{and}\quad  (a^2b)^3=b^{a}b^{a^2} b=1.
\]
Hence $b^{a^2}$ and $b^a$ commute. Conjugating by $a^2$, we see that $b$ commutes with $b^a$ and therefore with $(b^{a})^{-1}$. Since $[a,b]=(b^a)^{-1}b$, it follows that $[a,b]$ commutes with $b$, proving the lemma.
\end{proof}

\begin{proof}[Proof of Proposition \ref{prop:cubetoengel}]
Let $S=\{g\in G\mid g^3=1\}$ and $\varepsilon=P(G,x^3)$. By definition, $\lvert S\rvert = \varepsilon  \lvert G\rvert$.
We observe that $S=S^{-1}$ and hence we 
can form the undirected Cayley graph $\Gamma=\mathrm{Cay}(G,S\smallsetminus\{1\})$. Note that there are $\lvert G\rvert(\lvert S \rvert-1)/2$ unordered edges in $\Gamma$. Define a \emph{basic triangle} in $\Gamma$ to be of the form $g\langle s\rangle=\{g,gs,gs^2\}$ for $g\in G$ and $s\in S\smallsetminus\{1\}$. We want to count the number of edge-disjoint basic triangles.

We will show that each edge in $\Gamma$ belongs to a unique basic triangle. First, we observe that given an edge $\{g,gs\}$ in $\Gamma$, the subgroup $\langle s\rangle$ is uniquely determined by this edge as $\langle s \rangle=\langle g^{-1}gs\rangle = \langle (gs)^{-1}g \rangle$. Then, if $\{g,gs\}\subseteq h\langle t\rangle$ for $g,h\in G$ and $s,t\in S\smallsetminus\{1\}$, then we must have that $\langle t \rangle=\langle s \rangle$. So we can conclude that the basic triangles $h\langle t\rangle$ and $g\langle s\rangle$ coincide. Thus the basic triangles partition the edge set of $\Gamma$. Since each basic triangle has three edges, their number is
\begin{equation}\label{eq:numb_edges}
  \frac{\lvert G\rvert(\lvert S\rvert-1)}{6}.
\end{equation}

Suppose first that $\lvert G\rvert>2/\varepsilon $. Then $
\lvert S\rvert -1= \varepsilon \lvert G\rvert-1> \varepsilon \lvert G\rvert/2$. So by Equation~\eqref{eq:numb_edges} more than $\varepsilon \lvert G\rvert^2/12$ edges must be deleted to make $\Gamma$ triangle-free. Consequently, writing $\tau(\Gamma)$ for the number of unordered triangles in the Cayley graph $\Gamma$, by the contrapositive of Theorem \ref{thm:trirem} with $\varepsilon'=\varepsilon /12$, we must have
\[
\tau(\Gamma)> \delta_\triangle \left( \frac{\varepsilon }{12}\right)\lvert G\rvert^3.
\]

A rooted and oriented triangle can be written uniquely as $(g,gp,gpq)$ with $ p,q,pq\in S\smallsetminus\{1\}$. Therefore, since every unordered triangle has six rooted orientations, we have
\[
  \tau(\Gamma) =\frac{1}{6} \lvert G\rvert \cdot \lvert \{(p,q)\in G^2\mid p,q,pq\in S\smallsetminus\{1\}\} \rvert,
\]
and there are at least $6\delta_\triangle(\varepsilon /12)\lvert G\rvert^2$ such pairs $(p,q)$. For each such pair, put $a=p$, $b=p^{-1}q$.
Since the cubes of $a=p$, $ab=q$ and $a^2b=pq$ are all equal to 1, Lemma~\ref{lem:local} applies and gives $[a,b,b]=1$. Moreover, since the map $(p,q)\mapsto(p,p^{-1}q)$ is bijective, it follows that
\begin{equation}\label{eq:firstineq}
  P(G,[x,y,y])\ge 6\delta_\triangle\left(\frac{\varepsilon}{12}\right).
\end{equation}

If instead $\lvert G\rvert\le2/\varepsilon $, the $\lvert G\rvert$ pairs $\left\{(g,1)\mid g\in G\right\}$ already give 
\begin{equation}\label{eq:secondineq}
P(G,[x,y,y])\ge \frac{1}{\lvert G\rvert}\ge\frac{\varepsilon }{2}.\end{equation}

Therefore, by \eqref{eq:firstineq} and \eqref{eq:secondineq}, we conclude that $P(G,[x,y,y])\ge f(\varepsilon )$, where

\[
f(t)=\min\left\{\frac{t}{2},\,6\delta_{\triangle}\left(\frac{t}{12}\right)\right\}
\qquad \text{for } t\in(0,1].
\]
Now, by Remark~\ref{rmk:nondec}, both functions occurring in the minimum are non-decreasing, and hence so is $f$.
\end{proof}


\subsection{Independence and word fibres}\label{sec:elem_prob}

One might wonder whether Proposition~\ref{prop:cubetoengel} can be proved directly using independence. The three natural events arising from the argument are pairwise independent but need not be mutually independent, so the naive product estimate does not apply. Nevertheless, pairwise independence yields an elementary lower bound for $P(G,[x,y,y])$ when $P(G,x^3)>1/2$.

Recall that events $A_1,\ldots,A_n$ in a probability space $(\Omega,\mathbb{P})$ are said to  be \emph{mutually independent} if $\mathbb{P}(\bigcap_{j=1}^{s} A_{i_j})=\prod_{j=1}^s \mathbb{P}(A_{i_j})$ for all $1\le i_1<\ldots<i_s\le n$ with $1\le s\le n$. 

\begin{lemma}\label{lem:indep}
    Let $S\subseteq G$. Then the events $A=S\times G$, $B=\{ (x,y)\in G^2 \mid xy \in S \}$ and $C=G\times S$ are pairwise independent.
\end{lemma}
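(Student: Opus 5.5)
The plan is a direct counting argument in $G^2$ with the uniform measure. First compute the three marginals: $\mathbb{P}(A)=\mathbb{P}(C)=\lvert S\rvert/\lvert G\rvert$, since each of $A$ and $C$ is a product of $S$ with the whole group. For $B$, note that for each fixed $x\in G$ the map $y\mapsto xy$ is a bijection of $G$. Hence the number of $y$ with $xy\in S$ is exactly $\lvert S\rvert$. Summing over $x$ gives $\lvert B\rvert=\lvert G\rvert\lvert S\rvert$, so $\mathbb{P}(B)=\lvert S\rvert/\lvert G\rvert$ as well.

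Next, handle the three pairwise intersections one at a time, always fixing one coordinate and using that left or right translation is a bijection.

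\begin{itemize}
\item For $A\cap C=S\times S$, independence is immediate, since $\mathbb{P}(A\cap C)=(\lvert S\rvert/\lvert G\rvert)^2$.
\item For $A\cap B$, fix $x\in S$. The $y$ with $xy\in S$ are exactly those in $x^{-1}S$, which has $\lvert S\rvert$ elements. So $\lvert A\cap B\rvert=\lvert S\rvert^2$.
\item For $B\cap C$, fix $y\in S$. The $x$ with $xy\in S$ are exactly those in $Sy^{-1}$, again of size $\lvert S\rvert$. So $\lvert B\cap C\rvert=\lvert S\rvert^2$.
\end{itemize}

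Dividing by $\lvert G\rvert^2$, each pairwise intersection has probability $(\lvert S\rvert/\lvert G\rvert)^2$, which is the product of the corresponding marginals.

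There is no real obstacle here. The only point needing care is to choose, for each pair, which coordinate to fix: fix $x$ for $A\cap B$ and $y$ for $B\cap C$, so that the translation argument applies cleanly. Note also that no hypothesis on $S$ (such as $S=S^{-1}$ or $1\in S$) is required, and the lemma claims no mutual independence, so the triple intersection need not be considered.
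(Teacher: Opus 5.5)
Your proposal is correct and follows essentially the same counting argument as the paper: $\lvert B\rvert=\lvert G\rvert\lvert S\rvert$ by translation, and $\lvert A\cap B\rvert=\lvert S\rvert^2$ via $y\in x^{-1}S$. You also write out the case $B\cap C$ explicitly (via $x\in Sy^{-1}$), which the paper only calls analogous.
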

\begin{proof}
    The independence of $A$ and $C$ is immediate. We show that $A$ and $B$ are independent; the proof for $B$ and $C$ is analogous.
    
    First, since left-multiplication by an element of $G$ is a bijection of $G$, it is clear that $ \lvert B \rvert = \lvert G \rvert \cdot \lvert S\rvert$.
    Moreover, if a pair $(x,y)$ lies in the intersection $A \cap B$, then $x\in S$ and $xy\in S$. Hence, $x\in S$ and $y\in x^{-1}S$, so $\lvert A\cap B \rvert = \lvert S\rvert^2$. 
    Then $\mathbb{P}(A\cap B) = \lvert S \rvert^2/\lvert G \rvert^2 = \mathbb{P}(A)\cdot \mathbb{P}(B)$ and the result follows.
\end{proof}

\begin{remark}\label{rmk:mutally}
Let $S=\{g\in G\mid g^3=1\}$, and let $A$, $B$, $C$ be as in Lemma~\ref{lem:indep}. Then $A$, $B$, $C$ are pairwise independent and \[\mathbb{P}(A)=\mathbb{P}(B)=\mathbb{P}(C)=P(G,x^3).\]
If, in addition, the events $A$, $B$, $C$ were mutually independent, we would have that 
\begin{align*}
  P(G,[x,y,y]) & \ge \mathbb{P}(A\cap B\cap C) = \mathbb{P}(A) \mathbb{P}(B) \mathbb{P}(C)
  = P(G,x^3)^3,
\end{align*}
where the inequality follows by the bijection
$(x,y)\to (x,x^{-1}y)$ together with Lemma~\ref{lem:local}.

However, the events $A$, $B$, $C$ need not be mutually independent in general, as the following example shows. 
\end{remark}
\begin{esempio}
Consider the alternating group $A_4$. There are exactly $9$ elements of order dividing $3$, so $P(A_4,x^3)=3/4$. On the other hand, the elements $a$ of $A_4$ such that $a^3\neq 1$ are exactly the non-trivial elements of the Klein $4$-subgroup. By direct calculation, we obtain
\[
   \lvert \{(x,y)\in {A_4}^2 \mid x^3=y^3=(xy)^3=1\} \rvert = 57.
\]
Hence, $\mathbb{P}(A\cap B\cap C) = 57/144< \left(3/4\right)^3$.
\end{esempio}

We now use pairwise independence to obtain a direct lower bound for $\mathbb{P}(A\cap B\cap C)$ for groups where more than half of the elements have order dividing 3. 

\begin{lemma}\label{lem:pofint}
Let $A$, $B$ and $C$ be pairwise independent events in a probability space  $(\Omega, \mathbb{P})$ with $p=\mathbb{P}(A)=\mathbb{P}(B)=\mathbb{P}(C)$. Then
    $$ \mathbb{P}(A\cap B\cap C) \ge  \max\{0,2p^2-p\}.$$
\end{lemma}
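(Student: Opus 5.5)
Since $\mathbb{P}(A\cap B\cap C)\ge 0$ trivially, the content of the statement is the inequality $\mathbb{P}(A\cap B\cap C)\ge 2p^2-p$. The plan is to bound the triple intersection from below using only the pairwise intersections and a union bound. The key is to write
\[
\mathbb{P}(A\cap B\cap C)=\mathbb{P}(A\cap B)-\mathbb{P}(A\cap B\cap C^c),
\]
and then control the subtracted term by a quantity computable from pairwise data.

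First, by pairwise independence, $\mathbb{P}(A\cap B)=p^2$. Next, we bound $\mathbb{P}(A\cap B\cap C^c)$. The cleanest route is to note that $A\cap B\cap C^c\subseteq A\cap C^c$. Hence
\[
\mathbb{P}(A\cap B\cap C^c)\le \mathbb{P}(A\cap C^c)=\mathbb{P}(A)-\mathbb{P}(A\cap C)=p-p^2,
\]
again using pairwise independence of $A$ and $C$. Combining the two displays gives
\[
\mathbb{P}(A\cap B\cap C)\ge p^2-(p-p^2)=2p^2-p.
\]
Together with nonnegativity, this yields the claimed bound $\max\{0,2p^2-p\}$.

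An alternative, more symmetric route would be inclusion--exclusion, bounding $\mathbb{P}(A\cup B\cup C)\le 1$. This gives $\mathbb{P}(A\cap B\cap C)\ge 1-3p+3p^2$, which is a different bound and not the one stated, so I would use the monotonicity argument above. There is no real obstacle here. The only point requiring care is choosing which pairwise intersection to discard, namely replacing $A\cap B\cap C^c$ by $A\cap C^c$, so that exactly the linear term $p-p^2$ appears. Note also that the bound is informative only when $p>1/2$, consistent with the discussion preceding the lemma.
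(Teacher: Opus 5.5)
Your argument is correct and is the paper's argument in different notation: the slack in your step $\mathbb{P}(A\cap B\cap C^c)\le\mathbb{P}(A\cap C^c)$ is exactly $\mathbb{P}(A\cap B^c\cap C^c)$, and the paper gets the bound by expanding $0\le\mathbb{P}(A\cap B^c\cap C^c)=p-2p^2+q$. One correction to your aside: bounding $\mathbb{P}(A\cup B\cup C)=3p-3p^2+q$ by $1$ gives the \emph{upper} bound $q\le 1-3p+3p^2$, not a lower bound, as two fair coins with $C=$ ``the coins differ'' shows ($p=1/2$, yet $q=0$).
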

\begin{proof}
    By hypothesis, the events $A$, $B$ and $C$ are pairwise independent. Hence, $\mathbb{P}(A\cap B)=\mathbb{P}(B\cap C)= \mathbb{P}(A\cap C)=p^2$. Moreover, writing $q= \mathbb{P}(A\cap B \cap C)$, we have
    \begin{align*}
       0\le \mathbb{P}(A \cap B^c \cap C^c) & = \mathbb{P}(A) - \mathbb{P}(A \cap B) - \mathbb{P}(A \cap C) + \mathbb{P}(A \cap B \cap C) 
       \\ & = p-2p^2 +q.
    \end{align*}
    Then $q\ge 2p^2-p$, and since $q\ge0$, the result follows.
\end{proof}

\begin{corollary}\label{cor:half}
Let $\varepsilon\in (1/2,1]$ and $G$ be a finite group. If $P(G,x^3)=\varepsilon$, then $P(G,[x,y,y])\ge 2\varepsilon^2-\varepsilon$.
\begin{proof}
In the notation of Remark~\ref{rmk:mutally}, by the argument in the proof of Proposition \ref{prop:cubetoengel} and Lemma \ref{lem:pofint} we have that $P(G,[x,y,y])\ge \mathbb{P}(A\cap B\cap C)\ge 2\varepsilon^2-\varepsilon$.
\end{proof}
\end{corollary}


Unlike the bound obtained in Section~\ref{sec:triangle}, the bound in Corollary~\ref{cor:half} is explicit and follows from an elementary probabilistic argument. It applies, however, only when $P(G,x^3)>1/2$. This condition is rather restrictive; in particular, such groups are solvable; see \cite[Theorem~1.2(1)]{MY}.

\section{A generalisation via the group removal lemma}\label{general}
It is natural to ask whether an implication between identities induces a corresponding relation between word probabilities. More precisely, suppose that $w_1(G)=1$ implies $w_2(G)=1$ for two words $w_1,w_2$, for every finite group $G$. Does a uniform positive lower bound for $P(G,w_1)$ necessarily yield a uniform positive lower bound for $P(G,w_2)$? The following example shows that this is false in general, even when $w_1$ and $w_2$ define the same \emph{variety of groups} (in the sense of \cite{NeumannNeumann}).

\begin{esempio}\label{ex:Q2n}
    Consider the words $w_1(x,y)=x^2 y^{-2}$ and $w_2(x)=x^2$. Then, for any group $G$, $w_1(G)=1$ if and only if $w_2(G)=1$; that is, $w_1$ and $w_2$ define the same variety of groups. On the other hand, for $n\ge3$, consider the generalised quaternion group 
    \[
        Q_{2^n} = \left \langle a,b \mid a^{2^{n-1}}=1,\ b^2=a^{2^{n-2}},\ b^{-1}ab= a^{-1} \right \rangle. 
    \]
    Every element outside $\langle a\rangle$ has square $b^2$. A direct count gives $P(Q_{2^n},w_1)=1/4+3/2^n\ge 1/4$. Moreover, $b^2$ is the only element of order $2$, so
    \[
      P(Q_{2^n},w_2) = \frac{2}{2^n} \to 0 \quad \text{as } n\to \infty.
    \]
    \end{esempio}

This example shows that even if \(w_1\) and \(w_2\) define the same variety of groups, a fixed positive lower bound for $P(G,w_1)$ does not produce a uniform positive lower bound for $P(G,w_2)$. Thus, an implication between the corresponding identities is not sufficient to control the associated word probabilities. 

In the rest of this section we will prove a generalisation of Proposition~\ref{prop:cubetoengel} to other words.

\subsection{\texorpdfstring{$x^m$}{xm}-good words}\label{sec:good}

We now introduce a stronger condition, tailored to the group removal lemma (Theorem \ref{thm:removal} below), which keeps track of how one identity follows from a system of power relations.

Fix $m\ge3$, and consider the free group $F_{m-1}=\langle z_1,\ldots,z_{m-1}\rangle$. Let $R_m$ be the normal closure of $z_1^m,\ldots,z_{m-1}^m, (z_1\cdots z_{m-1})^m$ in $F_{m-1}$. We say that a word $w\in F_{m-1}$ is \emph{$x^m$-good} if there exists an automorphism $\alpha \in \mathrm{Aut}(F_{m-1})$ such that $\alpha(w)\in R_m$. Note that if $w$ is $x^m$-good, then every tuple $(g_1,\ldots,g_{m-1})$ of elements of a group $G$ satisfying $g_1^m = \ldots = g_{m-1}^m = (g_1\cdots g_{m-1})^m=1$ also satisfies $\alpha(w)(g_1,\ldots,g_{m-1})=1$.
Since $\alpha$ is an automorphism of $F_{m-1}$, \cite[Fact~1.1]{HMP} gives, for every finite group $G$, $P(G,w)=P(G,\alpha(w))$. 

\begin{esempio}[$x^3$-good words]\label{ex:x3good}
For $m=3$, we have $R_3=\langle z_1,z_2,(z_1 z_2)^3 \rangle^{F_2}$. Consider the automorphism $\alpha\in \mathrm{Aut}(F_2)$ defined by $\alpha(z_1)=z_1$ and $\alpha(z_2)=z_1^{-1}z_2$.
Modulo $R_3$, Lemma~\ref{lem:local} applied with $a=z_1$ and
$b=z_1^{-1}z_2$ shows that $b$, $b^{z_1}$ and $b^{z_1^2}$
commute pairwise. Since these elements generate the image of
$\alpha(\langle z_2\rangle^{F_2})$ modulo $R_3$, this image is abelian.
Hence, $\alpha([\langle z_2 \rangle^{F_2},\langle z_2 \rangle^{F_2}]) \le R_3$. It follows that for any two words $u,v\in \langle z_2 \rangle^{F_2}$, the commutator $[u,v]$ is $x^3$-good.

Since both $[z_1,z_2]=(z_2^{-1})^{z_1} z_2$ and $z_2$ are in $\langle z_2 \rangle^{F_2}$, it follows that $[z_1,z_2,z_2] = [[z_1,z_2],z_2]$ is $x^3$-good. 
Writing $e_1(z_1,z_2)=[z_1,z_2]$ and $e_{n+1}(z_1,z_2)=[e_n(z_1,z_2),z_2]$, it follows inductively that $e_n(z_1,z_2)$ is $x^3$-good for all $n\ge2$.

In the same way, one can show that any element of $F_2''$ is $x^3$-good.
\end{esempio}

The main result of this section is the following.

\begin{proposizione}\label{prop:good}
 Let $m\ge3$. Then there exists a non-decreasing function $\rho_m:(0,1] \to \mathbb (0,1]$ such that, for every $x^m$-good word $w\in F_{m-1}$ and every finite group $G$, 
 \[
   P(G,w) \ge \rho_m(P(G,x^m)).
 \]
\end{proposizione}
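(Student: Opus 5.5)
The plan mirrors the proof of Proposition~\ref{prop:cubetoengel}, with the triangle removal lemma replaced by the group removal lemma (Theorem~\ref{thm:removal}). I will use it in the following form: for every $m\ge3$ and $\eta>0$ there is $\delta_m(\eta)>0$ such that, for every finite group $G$ and subsets $S_1,\dots,S_m\subseteq G$, if the number of solutions of $x_1x_2\cdots x_m=1$ with $x_i\in S_i$ is at most $\delta_m(\eta)\lvert G\rvert^{m-1}$, then one can delete at most $\eta\lvert G\rvert$ elements from each $S_i$ so that no solutions remain. As in Remark~\ref{rmk:nondec}, I take $\delta_m$ to be the optimal constant, so it is non-decreasing in $\eta$.

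Set $S=\{g\in G\mid g^m=1\}$ and $\varepsilon=P(G,x^m)$, so $\lvert S\rvert=\varepsilon\lvert G\rvert$. Apply the removal lemma with $S_1=\dots=S_{m-1}=S$ and $S_m=S^{-1}=S$. A solution is a tuple $(g_1,\dots,g_{m-1})\in S^{m-1}$ with $(g_1\cdots g_{m-1})^{-1}\in S$, i.e.\ with $g_1^m=\dots=g_{m-1}^m=(g_1\cdots g_{m-1})^m=1$. Fix $\alpha\in\mathrm{Aut}(F_{m-1})$ with $\alpha(w)\in R_m$. Every such tuple satisfies $\alpha(w)=1$, so
\[
P(G,w)=P(G,\alpha(w))\ge N/\lvert G\rvert^{m-1},
\]
where $N$ is the number of solutions and the equality is \cite[Fact~1.1]{HMP}. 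It therefore suffices to bound $N$ below by a constant depending only on $m$ and $\varepsilon$; this gives a bound uniform in $w$.

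The key step is to exhibit many ``trivial'' solutions that cannot all be destroyed by deleting few elements. For $s\in S$, the tuple $(s,s,\dots,s)$ of length $m$ satisfies $s^m=1$, and the tuples $(g s, s, \dots, s, s g^{-1})$ give more. The cleanest choice mimics the basic triangles: for each $s\in S$ take the solution $(s,\dots,s)$, but then deleting all of $S$ from one $S_i$ is too costly only if $\lvert S\rvert>\eta\lvert G\rvert$. Indeed, deleting at most $\eta\lvert G\rvert$ elements from each $S_i$ removes at most $m\eta\lvert G\rvert$ elements $s$ from $S$ in total. So if $\varepsilon\lvert G\rvert>m\eta\lvert G\rvert$, some $s\in S$ survives in every $S_i$, and the solution $(s,\dots,s)$ survives. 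Choosing $\eta=\varepsilon/(2m)$ shows that solutions cannot all be removed, hence
\[
N>\delta_m\bigl(\varepsilon/(2m)\bigr)\lvert G\rvert^{m-1}.
\]
Remarkably, no case split on $\lvert G\rvert$ is needed here: the diagonal solutions already work, because $s=1$ is allowed. I expect the main obstacle to be pinning down the exact form of Theorem~\ref{thm:removal} stated in the paper, since it may use $m$ possibly different sets, be stated with ``$\delta\lvert G\rvert^{m-1}$'' normalisation, or require removal from a single set. Any of these forms adapts, perhaps with $\eta=\varepsilon/(2m)$ replaced by $\varepsilon/2$.

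Finally set $\rho_m(t)=\min\{1,\delta_m(t/(2m))\}$. It is positive, takes values in $(0,1]$, and is non-decreasing by the monotonicity of the optimal $\delta_m$. If instead the removal lemma requires $\lvert G\rvert$ large, I would add the small-group case as before, using the tuples $(g_1,\dots,g_{m-1})$ with all $g_i=1$. This gives $P(G,w)\ge\lvert G\rvert^{-(m-1)}$, which is bounded below in terms of $\varepsilon$ when $\lvert G\rvert\le C/\varepsilon$. I would then take the minimum of the two bounds.
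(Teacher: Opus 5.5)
Your proposal is correct and follows the paper's proof. Both use the removal lemma with every set equal to $S$ and $\eta=\varepsilon/(2m)$, and the diagonal solutions $(s,\dots,s)$, $s\in S$, which cannot all be destroyed since $m\eta\lvert G\rvert=\varepsilon\lvert G\rvert/2<\lvert S\rvert$; both then use $S=S^{-1}$ to identify product-one $m$-tuples in $S^m$ with $(m-1)$-tuples satisfying the relations of $R_m$, giving $P(G,w)=P(G,\alpha(w))\ge\rho_m(\varepsilon)$. The unused aside about the tuples $(gs,s,\dots,s,sg^{-1})$ is wrong, since $gs$ and $sg^{-1}$ need not have order dividing $m$, so these tuples need not lie in $S^m$; the argument does not rely on them.
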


The proof relies on the following `group removal lemma'.

\begin{teorema}[{\cite[Theorem~2]{KSV}}]\label{thm:removal}
  Let $G$ be a finite group. Let $A_1,\ldots, A_m$, $m \ge 2$, be subsets of $G$ and let $g\in G$. If the equation $x_1 \cdots x_m = g$ has $o(\lvert G\rvert^{m-1})$ solutions with $x_i \in A_i$, then there are subsets $A_i' \subseteq A_i$ with $\lvert A_i \smallsetminus A_i'\rvert = o(\lvert G\rvert)$ such that there is no solution of the equation $x_1 \cdots x_m = g$ with $x_i \in A_i'$.
\end{teorema}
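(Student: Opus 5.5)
The plan is to deduce Theorem~\ref{thm:removal} from the graph removal lemma for cycles, in the spirit of Král'--Serra--Vena. Write $N=\lvert G\rvert$.

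\textbf{The case $m=2$.} This case is direct. The solutions of $x_1x_2=g$ with $x_i\in A_i$ are in bijection with $A_1\cap gA_2^{-1}$. By hypothesis this set has $o(N)$ elements. Set $A_1'=A_1\smallsetminus gA_2^{-1}$ and $A_2'=A_2$; then there are no solutions with $x_i\in A_i'$.

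\textbf{The graph for $m\ge3$.} Build an $m$-partite graph $\Gamma$ with parts $V_1,\ldots,V_m$, each a copy of $G$, so $\Gamma$ has $n=mN$ vertices. For $1\le i<m$, join $v\in V_i$ to $u\in V_{i+1}$ whenever $v^{-1}u\in A_i$, and give this edge the label $v^{-1}u$. Join $v\in V_m$ to $u\in V_1$ whenever $v^{-1}ug\in A_m$, with label $v^{-1}ug$.

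Call a cycle $(v_1,\ldots,v_m)$ with $v_i\in V_i$ \emph{canonical}. Along a canonical cycle the labels multiply telescopically to $g$. Hence its labels $(x_1,\ldots,x_m)$ form a solution of $x_1\cdots x_m=g$ with $x_i\in A_i$. Conversely, each such solution together with a choice of $v_1\in G$ determines exactly one canonical cycle. So the number of canonical cycles is $N\cdot o(N^{m-1})=o(n^m)$.

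Two further facts are needed.
\begin{enumerate}
\item For each $i$ and each $x\in A_i$, the edges labelled $x$ between $V_i$ and $V_{i+1}$ (indices mod $m$) form a perfect matching of size $N$.
\item A canonical cycle is determined by any one of its edges: the edge fixes one vertex and its label, and the remaining labels come from the solution. Hence the $N$ cycles attached to a fixed solution are pairwise edge-disjoint.
\end{enumerate}

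\textbf{Removal and the choice of $A_i'$.} Apply the colored, partite form of the graph removal lemma for $C_m$ (Erdős--Frankl--Rödl, Alon--Shapira). Since $\Gamma$ has $o(n^m)$ canonical copies of $C_m$, one can delete a set $E$ of $o(n^2)=o(N^2)$ edges so that no canonical cycle survives.

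Define $A_i'$ to be the set of $x\in A_i$ for which fewer than $N/m$ edges labelled $x$ lie in $E$. Each $x\in A_i\smallsetminus A_i'$ accounts for at least $N/m$ edges of $E$. Therefore
\[
\lvert A_i\smallsetminus A_i'\rvert\le \frac{m\lvert E\rvert}{N}=o(N).
\]

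Now suppose some solution $(x_1,\ldots,x_m)$ had every $x_i\in A_i'$. Its $N$ attached canonical cycles are edge-disjoint, and each must contain an edge of $E$. So $E$ contains at least $N$ edges carrying one of the labels $x_1,\ldots,x_m$. But by the definition of $A_i'$ there are fewer than $m\cdot N/m=N$ such edges, a contradiction. Hence no solution has all $x_i\in A_i'$.

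\textbf{Main obstacle.} The step requiring care is invoking the right version of the removal lemma. The hypothesis controls only canonical copies of $C_m$, meaning those with one vertex in each part, taken in cyclic order. Closed walks that backtrack between consecutive parts are not counted and need not be few.

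I would use the partite (homomorphism) formulation: for an $m$-partite graph on $V_1,\ldots,V_m$ with edges only between consecutive parts, if there are $o(n^m)$ tuples $(v_1,\ldots,v_m)$ with $v_i\in V_i$ forming a cycle in order, then $o(n^2)$ edges can be deleted to destroy all of them. This follows from the Szemerédi regularity lemma together with the counting lemma applied to the pairs $(V_i,V_{i+1})$. The orientation issue disappears because, for $m\ge3$, the partition itself forces the cyclic order.
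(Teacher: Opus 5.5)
The paper itself gives no proof of this theorem; it is quoted from Kr\'al'--Serra--Vena. Your argument is correct and follows essentially the same scheme as that source: solutions of $x_1\cdots x_m=g$ become $m$-cycles in a graph on $m$ copies of $G$, a removal lemma for $m$-cycles deletes $o(N^2)$ edges, and the $N$ edge-disjoint cycles attached to each solution, combined with the threshold $N/m$ on deleted edges per label, turn deleted edges into deleted elements. You are also right that the cycle removal lemma must be one that counts only cycles passing through the parts in cyclic order (your partite formulation, or equivalently a directed one), since for even $m$ the other copies of $C_m$ can be abundant even when solutions are few.
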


We shall use the following quantitative formulation of Theorem~\ref{thm:removal}.  For every $m\ge2$ and every $\eta>0$, there exists a constant $\gamma_m(\eta)>0$ such that, for every finite group $G$ and every $A_1,\ldots,A_m\subseteq G$, if
\[
   \lvert \{(a_1,\ldots,a_m)\in A_1\times \ldots \times A_m \mid a_1 \cdots a_m =1\}\rvert <\gamma_m(\eta)\lvert G\rvert^{m-1},
\]
then one can remove at most $\eta|G|$ elements from each $A_i$ so that no product-one tuple remains. Furthermore, $\gamma_m(\eta)$ does \emph{not} depend on $G$, but only on $m$ and $\eta$.

\begin{remark}\label{rmk:nondec2}
    As in Remark~\ref{rmk:nondec}, one can easily see that considering the optimal group-removal constant, the function $\gamma_m$ is non-decreasing.
\end{remark}

\begin{proof}[Proof of Proposition \ref{prop:good}]
Let $S=\{s\in G\mid s^m=1\}$ and $\varepsilon=P(G,x^m)$. By definition, $\lvert S\rvert = \varepsilon\lvert G\rvert$. We apply the quantitative formulation of Theorem~\ref{thm:removal} above with $A_i=S$ for all $i$ and $\eta=\varepsilon/(2m)$.

Suppose that there are fewer than $\gamma_m(\eta)\lvert G\rvert^{m-1}$ product-one tuples in $S^m$. Then we can delete sets $D_i \subseteq S$ with $\lvert D_i\rvert \le \eta\lvert G\rvert$ and eliminate every solution. Since
\[
  \lvert D_1 \cup \ldots \cup D_m \rvert \le \sum_{i=1}^m \lvert D_i\rvert \le m\eta\lvert G\rvert=\frac{\varepsilon\lvert G\rvert}{2}<\lvert S\rvert,
\]
there exists $s\in S$ such that $s\in S\smallsetminus D_i$ for every $i$. It follows that the tuple $(s,\ldots,s)\in (S\smallsetminus D_1)\times\dots\times (S\smallsetminus D_m)$ is a solution of the equation $x_1 \cdots x_m = 1$, a contradiction. 

Since $S=S^{-1}$, we have that
\[  \lvert \{(s_1,\ldots,s_{m})\in S^{m} \mid \prod_{i=1}^m s_i =1 \}\rvert=
    \lvert \{(s_1,\ldots,s_{m-1})\in S^{m-1} \mid \prod_{i=1}^{m-1} s_i \in S\}\rvert. \]
Define \[\rho_m(t)=\gamma_m\left(\frac{t}{2m}\right)
\qquad \text{for } t\in(0,1].\]
Since $\eta=\varepsilon/(2m)$, the above cardinality is at least $\rho_m(\varepsilon)\lvert G\rvert^{m-1}$. Moreover, every such tuple $(s_1,\ldots,s_{m-1})$ satisfies the defining relations of $R_m$, so $\alpha(w)(s_1,\ldots,s_{m-1})=1$, as we noted earlier. Thus, $P(G,w)=P(G,\alpha(w))\ge \rho_m(\varepsilon)$. By Remark~\ref{rmk:nondec2}, $\rho_m$ is non-decreasing, and the result follows. \end{proof}

We conclude by observing that, taking into account Example~\ref{ex:x3good}, Proposition \ref{prop:good} gives an alternative proof of Proposition \ref{prop:cubetoengel}. 

\subsection{Quantitative bounds}\label{sec:constants}

The group-removal argument above yields a very weak quantitative bound. Indeed, if $\gamma_m(\eta)$ denotes the removal constant, known estimates give
\[
   \gamma_m(\eta)\ge \frac{1}{\mathrm{twr}_2(\lceil C_m \log(2/\eta)\rceil) },
\]
where $C_m>0$ is a computable constant, $\mathrm{twr}_2(n+1)=2^{\mathrm{twr}_2(n)}$ and $\mathrm{twr}_2(0)=1$ (see \cite{fox}).

However, in the special case where $m$ is even, the group removal lemma can be completely avoided, and one can take the polynomial function $\rho_m(t)=t^m.$

Let $m=2k$, set $S=\{s\in G\mid s^m=1\}$ and $\varepsilon=P(G,x^m)$, and define, for $g\in G$, \[r(g)= \lvert\{(s_1,\ldots,s_k)\in S^k \mid s_1\cdots s_k=g \}\rvert.\]
Since $S^{-1}=S$, the map $(s_1,\ldots,s_k)\to (s_k^{-1},\ldots,s_1^{-1})$ is a bijection of $S^k$, hence $s_1\cdots s_k=g$ if and only if $s_k^{-1}\cdots s_1^{-1}=g^{-1}$. Thus $r(g^{-1})=r(g)$. Consequently,
\[
   T=\lvert\{(s_1,\ldots,s_m)\in S^m \mid s_1\cdots s_m=1 \}\rvert = \sum_{g\in G} r(g)r(g^{-1}) = \sum_{g\in G} r(g)^2. 
\]
By the Cauchy--Schwarz inequality \cite[Thm.~3.5]{rudin}, we have 
\[\Bigg(\sum_{g\in G} r(g)\Bigg)^2\le\sum_{g\in G} r(g)^2\cdot \sum_{g\in G} 1^2.\] Since the rightmost sum equals $\lvert G\rvert$, it follows that
\[
  T \ge \frac{1}{\lvert G\rvert} \Bigg(\sum_{g\in G} r(g) \Bigg)^2 =\frac{1}{\lvert G\rvert}(\lvert S\rvert^k)^2= \frac{\lvert S\rvert^{m}}{\lvert G\rvert} = \varepsilon^m \lvert G \rvert^{m-1},
\] where the last equality follows from the definition $\lvert S\rvert = \varepsilon \lvert G\rvert$. Arguing as in the proof of Proposition~\ref{prop:good}, it follows that $P(G,w)\geq\varepsilon^m$. Therefore, when $m$ is even, one may take $\rho_m(t)=t^m$.

\section{Commutators of word values}\label{sec:commutators}

The aim of this section is to show that a lower bound on the commuting probability of $G$ also gives lower bounds for the probability that the values of two arbitrary words commute. This might seem trivial, but as Example~\ref{ex:Q2n} shows, surprising things can happen when working with word maps. Below we use \emph{Jensen's inequality} to study the probability that two arbitrary word values commute in a finite group and we prove the following.

\begin{proposizione}

Let $r,s\geq 1$, let $u(x_1,\ldots,x_r)\in F_r$ and $v(y_1,\ldots,y_s)\in F_s$ be words in disjoint sets of variables, and let $G$ be a finite
group. Then
\[
P\bigl(G,[u(x_1,\ldots,x_r),v(y_1,\ldots,y_s)]\bigr)
\geq P(G,[x,y])^{rs}.
\]  
\end{proposizione}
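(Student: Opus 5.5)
We need to show $P(G,[u,v]) \ge p^{rs}$, where $p=P(G,[x,y])$.

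\textbf{Reduction to a centraliser average.} For $g\in G$ write $c(g)=\lvert C_G(g)\rvert/\lvert G\rvert$. Let $\mu_u$ be the distribution of $u(x_1,\ldots,x_r)$ for uniform $x_i$, and similarly $\mu_v$. Since the variable sets are disjoint, the two values are independent, so
\[
P(G,[u,v])=\sum_{a,b\in G}\mu_u(a)\mu_v(b)\,\mathbf 1[ab=ba].
\]
Write $f(a)=\sum_b\mu_v(b)\mathbf 1[ab=ba]$ for the $\mu_v$-measure of $C_G(a)$. The basic tool will be Jensen's inequality (Cauchy--Schwarz) in the following form. If $H=C_G(a)$ and $v=v(y_1,\ldots,y_s)$, the event $v\in H$ contains the event that all $y_j\in H$. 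That gives only $f(a)\ge c(a)^s$, which is crude but already of the right shape. Then
\[
P(G,[u,v])\ge \mathbb E_{x}\big[c(u(x))^s\big].
\]

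\textbf{Symmetric step on $u$.} Swap the roles of $u$ and $v$: for fixed $y$, the $x$-probability that $u(x)$ centralises $b=v(y)$ is at least $\mathbb E$ over $x$ of the indicator that every $x_i$ centralises $b$, which is $c(b)^r$. This alone gives $\mathbb E_y[c(v(y))^r]$, so I would combine the two bounds more cleverly.

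\textbf{Main inequality.} The cleanest route is to bound
\[
P(G,[u,v]) \ge \mathbb P\big(\text{all } x_i \text{ commute with all } y_j\big)
=\mathbb E_{x}\Big[\Big(\tfrac{\lvert C_G(x_1,\ldots,x_r)\rvert}{\lvert G\rvert}\Big)^s\Big]
\ge \Big(\mathbb E_x\big[\tfrac{\lvert C_G(x_1,\ldots,x_r)\rvert}{\lvert G\rvert}\big]\Big)^s
\]
by Jensen's inequality for $t\mapsto t^s$. Here $\mathbb E_x[\lvert C_G(x_1,\ldots,x_r)\rvert/\lvert G\rvert]$ is the probability that a uniform $y$ commutes with all $x_i$, namely
\[
\mathbb E_y\big[c(y)^r\big]\ge \big(\mathbb E_y[c(y)]\big)^r=p^r,
\]
again by Jensen. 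Hence $P(G,[u,v])\ge (p^r)^s=p^{rs}$.

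\textbf{Main obstacle.} The key observation is that $[u(x),v(y)]=1$ whenever every $x_i$ commutes with every $y_j$. After that, the two applications of Jensen, and the interchange of the order of averaging via Fubini on the finite set $G^{r}\times G$, should be routine. The step to check carefully is the identity
\[
\mathbb E_x\big[\lvert C_G(x_1,\ldots,x_r)\rvert\big]/\lvert G\rvert=\mathbb E_y[c(y)^r],
\]
which comes from counting pairs $(x,y)$ with $y$ centralising all $x_i$ in two ways.
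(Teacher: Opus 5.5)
Your main argument is correct and is essentially the paper's proof. You bound $P(G,[u,v])$ below by the probability that every $x_i$ commutes with every $y_j$, apply Jensen to $t\mapsto t^s$, double count to rewrite $\mathbb{E}_{\mathbf{x}}\bigl[\lvert C_G(x_1,\ldots,x_r)\rvert/\lvert G\rvert\bigr]$ as $\mathbb{E}_y\bigl[(\lvert C_G(y)\rvert/\lvert G\rvert)^r\bigr]$, and apply Jensen to $t\mapsto t^r$; the earlier ``reduction'' and ``symmetric step'' paragraphs are unused and can be dropped.
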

\begin{proof}
Let $x_1,\ldots,x_r,y_1,\ldots,y_s$ be independent random elements of $G$, and put
$\mathbf{x}=(x_1,\ldots,x_r)$.
For $\mathbf{x}\in G^r$, define
\[c(\mathbf{x})=\frac{1}{|G|}\left|\bigcap_{i=1}^r C_G(x_i)\right|.\]
Thus $c(\boldsymbol{x})$ coincides with $P([x_i,y]=1\text{ for all }i \mid \boldsymbol{x})$, that is, the conditional probability that a random element $y$ of $G$ commutes with all the coordinates of a given $\boldsymbol{x}$. Since $y_1,\ldots,y_s$ are independent,
\[P\bigl([x_i,y_j]=1 \text{ for all }i,j\mid \mathbf{x} \bigr)=c(\mathbf{x})^s.\]
Averaging over $\mathbf{x}\in G^r$ and applying Jensen's inequality \cite[Thm.~3.3]{rudin} to the convex function $t\to t^s$, $t\ge0$, we
obtain
\[P\bigl([x_i,y_j]=1\text{ for all }i,j\bigr)=\frac{1}{|G|^r}\sum_{\mathbf{x}\in G^r} c(\mathbf{x})^s\geq\left(\frac{1}{|G|^r}\sum_{\mathbf{x}\in G^r} c(\mathbf{x})\right)^s.\]

For $y\in G$, put
\[d(y)=\frac{|C_G(y)|}{|G|}.\]
Double counting pairs $(\mathbf{x},y)\in G^r\times G$ such that $y$
commutes with every coordinate of $\mathbf{x}$ gives
\[\frac{1}{|G|^r}\sum_{\mathbf{x}\in G^r} c(\mathbf{x})=\frac{1}{|G|}\sum_{y\in G} d(y)^r.\]
Applying Jensen's inequality to the convex function $t\to t^r$, we obtain
\[\frac{1}{|G|}\sum_{y\in G} d(y)^r\geq\left(\frac{1}{|G|}\sum_{y\in G} d(y)\right)^r.\]
Since
\[\frac{1}{|G|}\sum_{y\in G} d(y)=\frac{1}{|G|^2}\sum_{y\in G}|C_G(y)|=P(G,[x,y]),\]
it follows that
\[P\bigl([x_i,y_j]=1\text{ for all }i,j)\geq P(G,[x,y])^{rs}.\]

Finally, write $X$ and $Y$ for the subgroups of $G$ generated by $\{x_1,\dots,x_r\}$ and $\{y_1,\dots,y_s\}$, respectively. If each $x_i$ commutes with each $y_j$, then $[X,Y]=1$, and $u(x_1,\ldots,x_r)\in X$ commutes with $v(y_1,\ldots,y_s)\in Y$. Consequently,
\[P\bigl(G,[u(x_1,\ldots,x_r),v(y_1,\ldots,y_s)]\bigr)\geq P(G,[x,y])^{rs},\]
as required.
\end{proof}


\begin{thebibliography}{9}

\bibitem{malekan2}
A.\ Abdollahi, M.\ Malekan.
\newblock Profinite groups with many elements of bounded order.
\newblock {\em Adv.\ Group Theory Appl.}, 13 (2022), 71--81.


\bibitem{CF}
D.\ Conlon, J.\ Fox.
\newblock Graph removal lemmas.
\newblock In {\em Surveys in Combinatorics 2013}, vol.\ 409 of London Mathematical Society Lecture Note Series (2013), 1--50.

\bibitem{fox}
J.\ Fox.
\newblock A new proof of the graph removal lemma.
\newblock {Ann.\ Math.}, 174 (2011), 561--579.

\bibitem{gust}
W.H.\ Gustafson.
\newblock What is the Probability that Two Group Elements Commute?
\newblock {\em Amer.\ Math.\ Monthly}, 80:9 (1973), 1031--1034.

\bibitem{HMP}
L.~Hanany, C.~Meiri, D.~Puder.
\newblock Some orbits of free words that are determined by measures on finite groups. 
\newblock {\em J.\ Algebra}, 555 (2020), 305--324.

\bibitem{KSV}
D.\ Kr\'al, O.\ Serra, L.\ Vena.
\newblock A combinatorial proof of the Removal Lemma for Groups.
\newblock {J.\ Comb.\ Theory Ser.\ A}, 116:4 (2009), 971--978.

\bibitem{lp}
L.\ L{\'e}vai, L.\ Pyber.
\newblock Profinite groups with many commuting pairs or involutions.
\newblock {\em Arch.\ Math.}, 75:1 (2000), 1--7.

\bibitem{LS}
M.\ Larsen, A.\ Shalev.
\newblock A probabilistic Tits alternative and probabilistic identities.
\newblock {\em Algebra Number Theory}, 10:6 (2016), 1359--1371.

\bibitem{LuSe}
A.\ Lubotzky, D.\ Segal.
\newblock Subgroup Growth, volume 212 of {\em Progress in Mathematics} (2003), Birkh\"auser, Basel.

\bibitem{malekan1}
M.\ Malekan, A.\ Abdollahi and M.\ Ebrahimi.
\newblock Compact groups with many elements
of bounded order.
\newblock {\em J.\ Group Theory}, 23 (2020), 991--998.

\bibitem{MY}
R.\ McCulloch, L.\ T.\ Young.
\newblock Finite groups with many elements of the same order.
\newblock Preprint (arxiv.org/abs/2602.19340), 2026.

\bibitem{NeumannNeumann}
H.\ Neumann.
\newblock Varieties of Groups. First edition (1967), Springer-Verlag, Berlin.


\bibitem{robinson}
D.J.S.\ Robinson.
\newblock A Course in the Theory of Groups.
\newblock Graduate Texts in Mathematics, vol.\ 80 (1996), Springer-Verlag.


\bibitem{rudin}
W.\ Rudin.
\newblock Real and Complex Analysis.
\newblock 3rd edition, McGraw-Hill (1987).

\bibitem{RS} 
I.Z.~Ruzsa and E.~Szemer\'edi.
\newblock Triple systems with no six points
carrying three triangles.
\newblock Combinatorics. Coll.\ Math.\ Soc.\ J.\ Bolyai 18, Volume II (1976), 939–945. 

\bibitem{Sh}
A.\ Shalev. 
\newblock Probabilistically nilpotent groups.
\newblock {\em Proc.\ Amer.\ Math.\ Soc.} 146 (2018), 1529--1536.
\end{thebibliography}
\end{document}